\renewcommand\subsection{\@startsection{subsection}{2}%
  \z@{-.5\linespacing\@plus-.7\linespacing}{.5\linespacing}%
  {\normalfont\scshape}}
\theoremstyle{definition}
\newtheorem{defn}{Definition}[section]
\theoremstyle{remark}
\newtheorem{rem}[defn]{Remark}
\theoremstyle{plain}
\newtheorem{thm}[defn]{Theorem}
\newtheorem{prop}[defn]{Proposition}
\newtheorem{lem}[defn]{Lemma}
\newtheorem{cor}[defn]{Corollary}
\newtheorem{quest}[defn]{Question}
\newcommand{\ie}{i.\,e.~}
\newcommand{\eg}{e.\,g.~}
\newcommand{\hfk}{\widehat{\operatorname{HFK}}}
\newcommand{\sigij}{\sigma_{i,j}}
\crefname{thm}{theorem}{theorems}
\newcounter{nparcount}
\newenvironment{sizeddisplay}[1]
 {\par\nopagebreak#1\noindent\ignorespaces}
 {\nopagebreak\ignorespacesafterend}
\title{Non-complex cobordisms between quasipositive knots}
\author{Maciej Borodzik}
\address{Institute of Mathematics, University of Warsaw, ul. Banacha 2, 02-096, Warsaw, Poland.}
\email{mcboro@mimuw.edu.pl}
\author{Paula Truöl}
\address{Max Planck Institute for Mathematics, Vivatsgasse 7, 53111 Bonn, Germany.}
\email{paulagtruoel@gmail.com}
\begin{document}

\def\subjclassname{\textup{2020} Mathematics Subject Classification}
\expandafter\let\csname subjclassname@1991\endcsname=\subjclassname
\subjclass{57K10; 14H50}
\keywords{Complex cobordisms, ribbon cobordisms, quasipositive knots}

\begin{abstract}
We show that for every genus $g \geq 0$, there exist quasipositive knots $K_0^g$ and $K_1^g$ such that there is a cobordism of genus $g=|g_4(K_1^g)-g_4(K_0^g)|$ between~$K_0^g$ and $K_1^g$, but there is no ribbon cobordism of genus $g$ in either direction and thus no complex cobordism between these two knots. This gives a negative answer to a question posed by Feller in 2016. 
\end{abstract}
\maketitle

\section{Introduction}
Let $K_0,K_1\subset S^3$ be knots. A~\emph{cobordism} of genus $g$ between $K_0$ and~$K_1$ is an oriented, connected, compact, smoothly embedded surface $\Sigma\subset S^3\times[0,1]$ with genus $g(\Sigma)=g$ and boundary $\partial\Sigma=K_1\times\{1\}\sqcup -K_0\times\{0\}$. In this paper we work exclusively in the smooth category. 
A~cobordism is \emph{ribbon} if~$\Sigma$ is embedded in such a way that the projection $S^3\times[0,1]\to[0,1]$ restricted to $\Sigma$ is a Morse function without local maxima. A cobordism is \emph{optimal} if $g(\Sigma)=|g_4(K_1)-g_4(K_0)|$, where $g_4(K_i)$ denotes the \emph{$4$-genus} of~$K_i$, $i \in \{0,1\}$, \ie the minimal genus of a cobordism between $K_i$ and the unknot. A \emph{concordance}, that is a cobordism with $g(\Sigma)=0$, is optimal by definition.

A motivating example for the study of optimal ribbon cobordisms comes from the theory of complex plane curves. Suppose $C\subset\C^2$ is a non-singular complex curve (\ie the zero set of a square-free polynomial in $\C[x,y]$) and~$S_0, S_1$ are two $3$-spheres in $\C^2$ with a common center, each intersecting~$C$ transversally. Set $K_0=C\cap S_0$ and $K_1=C\cap S_1$, and let $\Sigma$ be the part of~$C$ between $S_0$ and $S_1$. 
A classical argument using the maximum principle for the distance function restricted to $C$ shows that $\Sigma$ is a ribbon cobordism between $K_0$ and $K_1$ (see \eg \cite[Lemma 2.6]{borodzik_morse_theory_curves}).
The resolution of the Thom conjecture by Kronheimer--Mrowka~\cite{kronheimermrowka,rudolphslice}
implies that $g(\Sigma)=|g_4(K_1)-g_4(K_0)|$. 
A cobordism $\Sigma$ obtained as above from a non-singular complex curve is called \emph{complex}. 

Not every knot arises as a transverse intersection of a non-singular complex curve with a sphere in $\C^2$. Those that do are called \emph{quasipositive}, see~\cite{Rudolph_1983,boileau_orevkov_2001}. According to this work of Rudolph and Boileau--Orevkov, quasipositive knots can be characterized as the closures of quasipositive braids (see more below). It is interesting to compare optimal cobordisms between quasipositive knots and complex cobordisms. The following question was first asked by Feller.

\begin{quest}[{\cite{Feller_2016}}]\label{quest:Feller}
Are the two necessary conditions for the existence of a complex cobordism between two knots---the knots are quasipositive and there exists an optimal cobordism between them---sufficient?
\end{quest}
Our main result answers this question negatively.
\begin{thm}\label{thm:main_qp}
For every $g \geq 0$, there exist quasipositive knots $K_0^g$ and $K_1^g$ such that there is an optimal cobordism of genus $g$ between $K_0^g$ and $K_1^g$, but there is no complex cobordism between $K_0^g$ and $K_1^g$. 
\end{thm}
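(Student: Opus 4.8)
The plan is to construct, for each $g\ge 0$, an explicit pair of quasipositive knots connected by an optimal genus-$g$ cobordism, and then obstruct the existence of a ribbon cobordism of genus $g$ in either direction---since a complex cobordism is in particular a ribbon cobordism that is optimal, ruling out both directional ribbon cobordisms of genus $g$ rules out a complex cobordism. The natural building block is the negative-clasped twisted Whitehead-type or cable construction: start with a quasipositive knot $K_0^g$ of known $4$-genus (for instance a torus knot or a connected sum of torus knots), and produce $K_1^g$ by a sequence of saddle moves (band attachments) realizing an optimal cobordism, arranged so that $K_1^g$ is again quasipositive but with a "defect" that forbids ribbon cobordisms. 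A clean way to guarantee quasipositivity on both ends while controlling genera is to take $K_0^g$ and $K_1^g$ to differ by $2g$ band moves obtained from a quasipositive braid presentation (adding/removing quasipositive generators), so optimality of the cobordism follows from the slice-Bennequin inequality being sharp for quasipositive knots, i.e. $g_4 = g$ of the Seifert surface of the quasipositive braid.

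For the obstruction, I would use an invariant that behaves monotonically under ribbon cobordisms but not under arbitrary cobordisms. The two principal candidates are: (i) the fact, due to work building on Gordon and on recent results of Miyazaki/Daemen-type rigidity, that a ribbon cobordism induces an injection on the relevant Heegaard Floer or instanton homology, together with rank or torsion-order inequalities; or (ii) more elementary, the $d$-invariants of the double branched covers $\Sigma_2(K_i)$, or the Alexander module / Levine–Tristram signature function, which are constrained by the existence of a ribbon cobordism via the associated ribbon-move structure on the branched covers. Concretely, a genus-$g$ ribbon cobordism from $K_0$ to $K_1$ gives a surgery/handle description in which $\Sigma_2(K_1)$ is obtained from $\Sigma_2(K_0)$ by adding $1$- and $2$-handles with control on $H_1$, forcing a surjection $H_1(\Sigma_2(K_0)) \twoheadrightarrow H_1(\Sigma_2(K_1))$ up to $2g$ generators; choosing the knots so that the orders of these finite groups are coprime or grow in the "wrong" direction then kills ribbon cobordisms in both directions simultaneously while leaving a non-ribbon optimal cobordism intact. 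One should double-check that the chosen obstruction is genuinely asymmetric, so that neither $K_0^g\to K_1^g$ nor $K_1^g\to K_0^g$ admits a genus-$g$ ribbon cobordism.

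I expect the main obstacle to be the simultaneous control of three competing constraints: quasipositivity of \emph{both} $K_0^g$ and $K_1^g$ (a restrictive, one-sided condition that is not preserved under the band moves one would naively use), sharpness of the genus bound (so the cobordism is optimal), and the obstruction to ribbon cobordisms being strong enough to work in both directions and for \emph{every} genus $g$. Getting all of these at once likely forces a somewhat delicate explicit family---probably cables or twisted satellites of torus knots, or knots built from specific quasipositive braid words---where quasipositivity is verified by exhibiting the braid, optimality by the slice–Bennequin equality, and the ribbon obstruction by a computation of (say) the Alexander polynomial or a Floer-theoretic $d$- or $\tau$-type invariant of the branched double cover. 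The case $g=0$ (non-ribbon concordance between quasipositive knots) is the technical heart; the general $g$ should follow by stacking or connect-summing the $g=0$ example with standard complex/ribbon pieces of genus $g$, so that the genus is added "for free" without destroying either quasipositivity or the obstruction.
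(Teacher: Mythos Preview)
Your obstruction strategy is essentially the one the paper uses: compare $\dim_{\F_p} H_1(\Sigma_2(K_i);\F_p)$ for two different primes so as to rule out genus-$g$ ribbon cobordisms in both directions, via Livingston's inequalities $c_0\ge\tfrac12\bigl(\beta_1^p(K_1)-\beta_1^p(K_0)\bigr)-g$ and the symmetric bound on $c_2$. What is missing from your proposal is the construction, which you expect to be delicate (cables, twisted satellites, carefully chosen band moves on quasipositive braid words) and whose three-way tension you explicitly flag as the main obstacle. The paper's key observation dissolves this difficulty entirely: there exist \emph{slice quasipositive} knots with coprime determinants, concretely $J_0=m(8_{20})$ with $\det J_0=9$ and $J_1=m(11n_{50})$ with $\det J_1=25$. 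Starting from any quasipositive pair $K_0,K_1$ admitting an optimal genus-$g$ cobordism (e.g.\ $T_{2,3}$ and $T_{2,2g+3}$), set $K_0'=K_0\# mJ_0$ and $K_1'=K_1\# mJ_1$. Connected sums of quasipositive knots are quasipositive; sliceness of the $J_i$ preserves the concordance classes and hence optimality of the cobordism; and taking $m$ large relative to $g$ and the $\F_3,\F_5$ Betti numbers of $\Sigma_2(K_0),\Sigma_2(K_1)$ forces $c_0,c_2>0$ for every genus-$g$ cobordism between $K_0'$ and $K_1'$. No braids, cables, or Floer theory are needed.

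Your proposed architecture is also inverted. You suggest that $g=0$ is the technical heart and general $g$ follows by stacking; in the paper the argument above handles all $g\ge 0$ uniformly in one stroke, and it is instead the \emph{strongly} quasipositive $g=0$ statement that requires a separate and genuinely more delicate construction (satellites of the trefoil with companions $m(9_{46})$ and $m(10_{140})$, together with an explicit $\widehat{\mathrm{HFK}}$ computation).
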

The proof, given in Section~\ref{sec:main_proofs}, shows that no optimal cobordism between $K_0^g$ and $K_1^g$ is ribbon. To prove this, we successfully exploit a recent obstruction to ribbon cobordisms by Livingston~\cite{livingston_crit_point_counts}, see Theorem~\ref{thm:livingston} below. 

A special class of quasipositive knots is given by \emph{strongly quasipositive} knots, which are closures of strongly quasipositive braids~\cite{rudolph_braidedsurfaces,rudolph_linkpolys}. 
A braid on $n$ strands is \emph{quasipositive} if it is a finite product of conjugates of the positive
Artin generators $\sigma_i$~\cite{artin_1925}, and \emph{strongly quasipositive} if the product consists only of certain conjugates of the $\sigma_i$, namely 
\begin{align*}
\sigij = \left(\sigma_i \cdots \sigma_{j-2} \right) \sigma_{j-1} \left(\sigma_i \cdots \sigma_{j-2} \right)^{-1} \qquad \text{for} \quad 1 \leq i < j \leq n.
\end{align*}
Within the class of quasipositive braids, strongly quasipositive braids are special because they come with an associated Seifert surface that realizes the minimal genus among Seifert surfaces for their closure~\cite{bennequin}.
In the context of (smooth) concordances between knots, it follows from Rudolph's slice--Bennequin inequality~\cite{rudolphslice} that not every knot is concordant to a quasipositive knot, which is contrary to the behavior in the topological category~\cite{borodzikFeller}.

Building on \cite{truoel_sqp} and using Heegaard Floer homology as an obstruction to ribbon concordances~\cite{Zemke}, we are able to show the following strengthening of Theorem~\ref{thm:main_qp} in the case where $g=0$. 

\begin{prop}\label{prop:sqp_conc}
There exist strongly quasipositive knots $K_0$ and $K_1$ such that there is a concordance between them, but no ribbon (and thus no complex) concordance in either direction.
\end{prop}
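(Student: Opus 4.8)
The plan is to build the two strongly quasipositive knots $K_0$ and $K_1$ as closures of explicit strongly quasipositive braids, chosen so that they are concordant but distinguished by a one-sided obstruction to ribbon concordance. The natural way to get a concordance for free is to take $K_1$ to be a knot obtained from $K_0$ by a ``mutation-like'' or ``satellite/infection''-type modification that is invisible to concordance: for instance, one may take $K_0$ and $K_1$ to be related by a local crossing change along a null-homologous curve, realized inside a genus-one cobordism that happens to have two critical points of index one and one of index two but whose algebraic cancellation forces $g_4(K_0)=g_4(K_1)$ and produces an honest concordance; alternatively, and more cleanly, I would look for a pair already present in the literature (e.g.\ among the strongly quasipositive knots studied in \cite{truoel_sqp}) that are known to be concordant---say both topologically slice, or both concordant to a common knot---while differing in the relevant Floer-theoretic invariant. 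The point is that the \emph{existence} of a concordance should be the easy half, obtained by an explicit handle decomposition or by citing a known concordance.

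The substance is the obstruction. Following the remark in the excerpt, the tool is Zemke's theorem \cite{Zemke}: if there is a ribbon concordance from $K$ to $K'$, then the induced map $\hfk(K)\to\hfk(K')$ (and on $\mathit{CFK}^\infty$, or on $\widehat{\mathit{HF}}$ of large surgeries) is injective, with a corresponding surjection in the other direction; consequently a ribbon concordance in either direction forces a numerical inequality between the ranks of $\hfk$ (or of the relevant associated graded pieces, respecting the Alexander and Maslov gradings). So the first key step is to compute $\hfk(K_0)$ and $\hfk(K_1)$ --- or at least enough of their bigraded structure --- and to arrange that $\operatorname{rk}\hfk(K_0)\neq\operatorname{rk}\hfk(K_1)$, or more robustly that neither total rank divides into a rank-compatible injection both ways, so that \emph{no} ribbon concordance can exist from $K_0$ to $K_1$ nor from $K_1$ to $K_0$. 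Since the knots are strongly quasipositive, their $\tau$ invariants equal their Seifert genera and their top Alexander gradings are controlled, which helps pin down the computation; I would do the $\hfk$ computation either directly from the (strongly quasipositive) braid via a grid diagram or the Ozsváth--Szabó cube-of-resolutions, or by recognizing the knots among tabulated ones. Once the ranks differ, Zemke's inequality rules out ribbon concordance in both directions simultaneously, and since a complex (indeed any complex) concordance would in particular be ribbon, this also rules out a complex concordance.

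I would then assemble the proof as follows: (1) exhibit the strongly quasipositive braid words for $K_0$ and $K_1$ and verify strong quasipositivity (immediate from the braid being a product of the standard positive band generators); (2) exhibit an explicit concordance between $K_0$ and $K_1$ --- e.g.\ by a movie of the relevant handle moves, or by citing that both are concordant to a common knot; (3) compute the knot Floer homology groups (or the pertinent graded ranks) of $K_0$ and $K_1$ and observe they are incompatible with any rank-preserving injection in either direction; (4) invoke \cite{Zemke} to conclude there is no ribbon concordance $K_0\to K_1$ and none $K_1\to K_0$; (5) note that a complex concordance is in particular ribbon, so none exists.

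The main obstacle I expect is step (3) together with the bookkeeping in step (4): one must produce a concrete pair whose $\hfk$ is computable \emph{and} genuinely asymmetric in the right way, while simultaneously keeping step (2) true --- these two demands pull in opposite directions, since invariants like $\hfk$ that obstruct ribbon concordance tend to be concordance-sensitive as well, and one must exploit precisely the gap between $\hfk$ (not a concordance invariant) and, say, $\tau$ or the $\varepsilon$/$\nu^+$ invariants (which are). Finding strongly quasipositive representatives on both sides, rather than just quasipositive ones, is the extra constraint that makes this a genuine strengthening of \Cref{thm:main_qp} and is where the input from \cite{truoel_sqp} will be essential. A secondary subtlety is ensuring the rank comparison is done with the correct gradings: Zemke's map respects the Maslov and Alexander gradings, so I must check incompatibility grading-by-grading rather than only comparing total ranks, which is slightly stronger but also gives more room to find an obstruction.
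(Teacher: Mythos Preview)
Your proposal is correct and matches the paper's approach essentially point for point: the paper uses the construction of \cite{truoel_sqp} with pattern $J=T_{2,3}$ and companions $C_0=m(9_{46})$, $C_1=m(10_{140})$ to produce strongly quasipositive knots $K_0=P_J(C_0)$, $K_1=P_J(C_1)$ that are each concordant to the trefoil (hence to each other), then computes $\hfk$ explicitly via PD codes and Szab\'o's calculator to exhibit bigradings where $\dim\hfk(K_0)>\dim\hfk(K_1)$ and others where the inequality reverses, and applies \cite{Zemke} exactly as you describe. Your anticipated subtleties---that the concordance comes from both knots being concordant to a common knot, and that the obstruction must be checked grading by grading rather than on total rank---are precisely how the paper proceeds.
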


We conclude the introduction with a few open questions. Firstly, our obstruction to complex cobordisms obstructs ribbon cobordisms. 
\begin{quest}
Suppose that there is an optimal ribbon cobordism between quasipositive knots $K_0$ and $K_1$. Does it follow that there is a complex cobordism between $K_0$ and $K_1$?
\end{quest}
The next question arises from the observation that the examples we construct in the proof of Proposition~\ref{prop:sqp_conc} are not fibered.
\begin{quest}\label{quest:2}
Suppose that there is an optimal cobordism between strongly quasipositive, fibered knots $K_0$ and $K_1$. Is there a complex cobordism?
\end{quest}
Conjecturally, concordant, strongly quasipositive, fibered knots are isotopic \cite{Baker_2015}.
A possible negative answer to Question~\ref{quest:2} would be to find two concordant, strongly quasipositive, fibered knots with different Seifert forms. At present, any known construction of concordant strongly quasipositive knots, even if we drop the fiberedness condition, is between knots which share their Seifert form. 
\begin{quest}\label{quest:Seifert_form}
Do there exist two strongly quasipositive knots that are concordant, but have distinct Seifert forms? 
\end{quest}
A positive answer to Question~\ref{quest:Seifert_form} could place us in a position of applying Livingston's results to obstruct complex concordances between strongly quasipositive knots.
\begin{rem}
In this paper we use the terminology `complex cobordism' instead of `algebraic cobordism' of e.g. \cite{Feller_2016} to avoid a terminology clash. There is the notion of an algebraically slice knot, see \eg \cite{Chuck_algeb,Hedden-Kirk-Livingston_algebraic}, which means that the Seifert form of the knot is metabolic. The term `algebraically cobordant' is dangerously similar to `algebraically slice'. 
\end{rem}

Neither of the properties `optimal' or `ribbon' of a cobordism implies the other. Theorem~\ref{thm:main_qp} and Proposition~\ref{prop:sqp_conc} show that optimal cobordisms between (strongly) quasipositive knots are not necessarily ribbon. 
If we drop the condition of quasipositivity, examples are easier to obtain and well-known:  
Let $K$ denote the trefoil and $J$ the figure-eight knot. Then $K_0 = K \# -K$ and $K_1 = J \# -J$ are both slice, thus concordant. 
On the other hand, one can consider the coprime Alexander polynomials of $K_0$ and~$K_1$ to show that they are not ribbon concordant \cite{gordon,friedl_powell}.

To see that not every ribbon cobordism between strongly quasipositive knots is optimal, consider the following example. There is a ribbon cobordism~$\Sigma$ of genus one between the trefoil and the positive untwisted Whitehead double of the trefoil, which can be constructed by using two saddle moves; an intermediate stage of the cobordism is a positive Hopf link. Both knots are strongly quasipositive and have $4$-genus one \cite{rudolphslice}, so $\Sigma$ is not optimal. 

\subsubsection*{Organization.}
We recall Livingston's obstruction in Section~\ref{sec:chuck} and prove Theorem~\ref{thm:main_qp} in Section~\ref{sec:main_proofs}. The proof of Proposition~\ref{prop:sqp_conc} can be found in Section~\ref{sec:constr}, while the computational details of the relevant PD codes are discussed in Section~\ref{sec:details}.

\subsubsection*{Acknowledgements.} We are indebted to Hannah Turner for many fruitful discussions. 
We would also like to thank L\'eo B\'enard, Peter Feller, Daniel Galvin and Lukas Lewark for helpful discussions and comments. We would like to express our gratitude to Arunima Ray for her many comments on a draft version of the paper. We thank the anonymous referee for very helpful comments and suggestions, particularly the alternative proof of Lemma 2.2. Part of this project was carried out during the first author's visit to the Max Planck Institute for Mathematics in Bonn. The authors would like to thank the institute for its hospitality and support. The first author was covered by the Polish NCN Opus grant 2024/53/B/ST1/03470. 

\section{Background on double branched covers of knots}\label{sec:chuck}

\subsection{Classical results}\label{sec:branched_covers}
We will use the following elementary fact about double branched covers, see \eg \cite[Theorem 9.1 and Corollary 9.2]{Lickorish97}.

\begin{lem}\label{lem:hom_double_br_cover}
Let $\Sigma_2(K)$ denote the double branched cover of $S^3$ branched along $K$. Its first homology group $H_1(\Sigma_2(K); \Z)$ is an abelian group of finite order equal to the determinant $\det K$.
\end{lem}

\begin{lem}\label{lem:conn_sum}
Suppose $K_1,\dots,K_n$ are knots. Set $K=K_1\#\dots\#K_n$. Then
\[H_1(\Sigma_2(K);\Z)\cong \bigoplus_{i=1}^n H_1(\Sigma_2(K_i);\Z).\]
\end{lem}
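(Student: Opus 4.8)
The plan is to deduce \Cref{lem:conn_sum} from \Cref{lem:hom_double_br_cover} by a direct computation with Seifert matrices. Recall the standard fact that if $V_i$ is a Seifert matrix for $K_i$, then the block-diagonal matrix $V = V_1 \oplus \dots \oplus V_n$ is a Seifert matrix for the connected sum $K = K_1 \# \dots \# K_n$; this follows from taking the boundary connected sum of the corresponding Seifert surfaces and choosing a basis of $H_1$ adapted to this decomposition. I would state this as the first step, citing a standard reference (\eg Lickorish) or simply recalling the one-line construction of the Seifert surface for a connected sum.

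Given this, the second step is purely linear algebra: for $V = V_1 \oplus \dots \oplus V_n$ we have $V + V^T = (V_1 + V_1^T) \oplus \dots \oplus (V_n + V_n^T)$, again block-diagonal. A block-diagonal presentation matrix presents the direct sum of the groups presented by the individual blocks, since $\Z^m / (A \oplus B)\Z^m \cong (\Z^{m_1}/A\Z^{m_1}) \oplus (\Z^{m_2}/B\Z^{m_2})$. Applying \Cref{lem:hom_double_br_cover} to each $K_i$ and to $K$ then gives
\[
H_1(\Sigma_2(K);\Z) \cong \Z^m/(V+V^T)\Z^m \cong \bigoplus_{i=1}^n \Z^{m_i}/(V_i+V_i^T)\Z^{m_i} \cong \bigoplus_{i=1}^n H_1(\Sigma_2(K_i);\Z),
\]
which is the claim.

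Honestly, there is no real obstacle here — the lemma is essentially a bookkeeping consequence of \Cref{lem:hom_double_br_cover} together with the additivity of Seifert matrices under connected sum. The only point requiring a modicum of care is making sure the reader accepts the additivity of Seifert matrices; if one prefers to avoid invoking it as a black box, an alternative is to use the well-known connected-sum formula $\Sigma_2(K_1 \# \dots \# K_n) \cong \Sigma_2(K_1) \# \dots \# \Sigma_2(K_n)$ for double branched covers of $3$-manifolds and then apply the Mayer--Vietoris sequence (or the fact that first homology is additive under connected sum of closed oriented $3$-manifolds). Either route is short; I would pick the Seifert-matrix argument since \Cref{lem:hom_double_br_cover} is already in hand and phrased in exactly those terms.
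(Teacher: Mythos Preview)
Your proposal is correct and matches the paper's own proof almost verbatim: the paper simply cites the additivity of Seifert matrices under connected sum (referencing Lickorish, Proposition~6.12) and notes that this, together with \Cref{lem:hom_double_br_cover}, immediately yields the statement. Your write-up is a slightly more expanded version of exactly this argument.
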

\begin{proof}
The double branched cover of a connected sum of knots is the connected sum of the double branched covers of the knots. The fundamental group of~$\Sigma_2(K)$ therefore is the free product of the fundamental groups of the double branched covers $\Sigma_2(K_1), \dots, \Sigma_2(K_n)$. Since the first homology group with $\Z$ coefficients is the abelianization of the fundamental group, the result follows.
\end{proof}

As a corollary, we can prove the following.
\begin{prop}\label{prop:main_computation}
Let $p$ be a prime. If $K$ is a knot such that $p$ divides~$\det K$, then, as an $\F_p$-vector space, 
\[\dim_{\F_p} H_1(\Sigma_2(nK);\F_p)\ge n.\]
\end{prop}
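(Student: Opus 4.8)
The statement concerns $nK = K \# K \# \cdots \# K$ ($n$ copies). The plan is to combine \Cref{lem:conn_sum} with the structure theorem for finitely generated abelian groups, passing to $\F_p$-coefficients at the end.

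First I would apply \Cref{lem:conn_sum} with $K_1 = \cdots = K_n = K$ to obtain
\[
H_1(\Sigma_2(nK);\Z) \cong \bigoplus_{i=1}^n H_1(\Sigma_2(K);\Z) = H_1(\Sigma_2(K);\Z)^{\oplus n}.
\]
By \Cref{lem:hom_double_br_cover}, $G \defeq H_1(\Sigma_2(K);\Z)$ is a finite abelian group of order $\det K$. Since $p \mid \det K = |G|$, Cauchy's theorem for abelian groups (or the structure theorem) guarantees that $G$ has an element of order $p$, equivalently $G/pG \ne 0$, i.e. $\dim_{\F_p}(G \otimes_\Z \F_p) \geq 1$.

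Next I would use that tensoring with $\F_p$ is additive over direct sums: for any finitely generated abelian group $H$, $\dim_{\F_p} H_1(\Sigma_2(nK);\F_p) = \dim_{\F_p}\big(H_1(\Sigma_2(nK);\Z) \otimes_\Z \F_p\big)$ by the universal coefficient theorem (the $\operatorname{Tor}$ term, if one works with a finite coefficient ring, only increases the dimension, so this is at worst an inequality in the right direction; since $\Z^m/A\Z^m$ has no relevant subtlety here one gets equality, but even the inequality $\dim_{\F_p} H_1(\,\cdot\,;\F_p) \ge \dim_{\F_p}(H_1(\,\cdot\,;\Z)\otimes\F_p)$ suffices). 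Combining,
\[
\dim_{\F_p} H_1(\Sigma_2(nK);\F_p) \;\ge\; \dim_{\F_p}\big(G^{\oplus n}\otimes_\Z \F_p\big) \;=\; n\cdot\dim_{\F_p}(G\otimes_\Z\F_p) \;\ge\; n\cdot 1 \;=\; n.
\]

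I do not anticipate a serious obstacle: the argument is a direct assembly of the two preceding lemmas with elementary commutative algebra. The only point requiring a moment of care is the passage between integral homology and $\F_p$-homology — one should either invoke the universal coefficient theorem cleanly or, more simply, note that $H_1(\Sigma_2(nK);\F_p)$ is presented over $\F_p$ by the reduction mod $p$ of the symmetrized block Seifert matrix, so that its $\F_p$-dimension is $n$ times the $\F_p$-corank of $V+V^T$, which is positive precisely because $p \mid \det(V+V^T)$. Either route gives the bound $\dim_{\F_p} H_1(\Sigma_2(nK);\F_p) \ge n$.
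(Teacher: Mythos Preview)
Your proof is correct and follows essentially the same approach as the paper: apply \Cref{lem:conn_sum} to get $H_1(\Sigma_2(nK);\Z)\cong H_1(\Sigma_2(K);\Z)^{\oplus n}$, use that $p\mid\det K$ forces a nontrivial $p$-summand (the paper phrases this via the structure theorem, you via Cauchy), and then pass to $\F_p$-coefficients by the universal coefficient theorem. Your extra caution about the $\operatorname{Tor}$ term is harmless but unnecessary here, since $H_0(\Sigma_2(nK);\Z)\cong\Z$ is torsion-free and so the UCT gives an honest isomorphism $H_1(\Sigma_2(nK);\F_p)\cong H_1(\Sigma_2(nK);\Z)\otimes_\Z\F_p$.
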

\begin{proof}
Let $H=H_1(\Sigma_2(K);\Z)$.
If $p$ divides $\det K$, then by Lemma~\ref{lem:hom_double_br_cover}, $p$ divides the order of $H$. By the classification of finite abelian groups, $H$ can be written as $P\oplus Q$, where $P$ is a cyclic $p$-group. By Lemma~\ref{lem:conn_sum}, we obtain
\[H_1(\Sigma_2(nK);\Z)=nP\oplus nQ.\]
By the universal coefficient theorem  $H_1(\Sigma_2(nK);\F_p)=H_1(\Sigma_2(nK);\Z)\otimes_{\Z}\F_p$ contains a subgroup
$nP\otimes_{\Z}\F_p\cong \F_p^{\oplus n}$.
\end{proof}
The universal coefficient theorem argument used in the proof of Proposition~\ref{prop:main_computation} also implies the following well-known corollary.
\begin{cor}\label{cor:main_computation}
Suppose $K$ is a knot.
If a prime $p$ does not divide $\det K$, then $H_1(\Sigma_2(K);\F_p)=0$.
\end{cor}

\subsection{Livingston's obstruction}\label{sec:livingston}

In \cite{livingston_crit_point_counts}, Livingston studies for given knots $K_0$ and $K_1$, the set of all four-tuples $(g, c_0, c_1, c_2)$ of non-negative integers for which there is a cobordism~$\Sigma$ between $K_0$ and $K_1$ of genus $g(\Sigma)=g$ with $c_i(\Sigma) = c_i$ critical points of each index~$i \in \{0,1,2\}$. His obstruction to the existence of $\Sigma$ is in terms of homology groups of cyclic branched covers of $K_0$ and $K_1$. 
We use the following version of his result, noting that \cite{livingston_crit_point_counts} contains more general statements.
\begin{thm}[{\cite[Corollary 5.4]{livingston_crit_point_counts}}]\label{thm:livingston}
    Let $\Sigma$ be a cobordism between knots $K_0$ and $K_1$. 
    Then for all odd primes $p$, we have
    \begin{align*}
        c_0 (\Sigma) &\geq \frac{\beta_1(\Sigma_2(K_1), \F_p)- \beta_1(\Sigma_2(K_0), \F_p)}{2}-g(\Sigma) \qquad \text{and}\\
        c_2 (\Sigma) &\geq \frac{\beta_1(\Sigma_2(K_0), \F_p)- \beta_1(\Sigma_2(K_1), \F_p)}{2}-g(\Sigma),
    \end{align*}
    where for $i \in \{0,1\}$, $\beta_1(\Sigma_2(K_i), \F_p)$ is the dimension of $H_1(\Sigma_2(K_i); \F_p)$ as an~$\F_p$-vector space. 
\end{thm}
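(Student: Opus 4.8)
The plan is to pass to double branched covers. Given the cobordism $\Sigma\subset S^3\times[0,1]$, form the compact connected oriented $4$-manifold $W=\Sigma_2(S^3\times[0,1],\Sigma)$, the double cover of $S^3\times[0,1]$ branched along $\Sigma$; its boundary is $\partial W=-\Sigma_2(K_0)\sqcup\Sigma_2(K_1)$. Put the projection $S^3\times[0,1]\to[0,1]$ in Morse position on $\Sigma$, so that it has $c_i=c_i(\Sigma)$ critical points of index $i\in\{0,1,2\}$. Between critical levels $\Sigma$ is a product $L\times[a,b]$ over a link $L$, and $W$ restricts there to the product cobordism $\Sigma_2(S^3,L)\times[a,b]$; crossing a critical point of index $i$ of $\Sigma$ modifies $W$ by attaching a handle of index $i+1$, so that a birth contributes a $1$-handle, a saddle a $2$-handle, and a death a $3$-handle. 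After reordering handles we obtain a handle decomposition of $W$ relative to $\Sigma_2(K_0)\times[0,1]$ with $c_0$ one-handles, $c_1$ two-handles and $c_2$ three-handles; turning it upside down gives a handle decomposition relative to $\Sigma_2(K_1)\times[0,1]$ with $c_2$ one-handles, $c_1$ two-handles and $c_0$ three-handles.

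Fix an odd prime $p$ and write $b_i$ for $\beta_1(\Sigma_2(K_i);\F_p)$ and $b_W$ for $\beta_1(W;\F_p)$. A Mayer--Vietoris computation records the effect of a handle attachment on $\beta_1(\,\cdot\,;\F_p)$ of a connected $4$-manifold: a $1$-handle raises it by exactly $1$, a $2$-handle lowers it by $0$ or $1$, and a $3$-handle leaves it unchanged. Running through the first handle decomposition gives
\[b_0+c_0-c_1 \le b_W \le b_0+c_0,\]
and through the dual one gives $b_1+c_2-c_1\le b_W\le b_1+c_2$. Comparing the lower bound from each of these with the upper bound from the other yields the two inequalities $b_1+c_2-c_1\le b_0+c_0$ and $b_0+c_0-c_1\le b_1+c_2$.

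It remains to eliminate $c_1$. Since $\Sigma$ is connected of genus $g(\Sigma)$ with two boundary circles, $c_0-c_1+c_2=\chi(\Sigma)=-2g(\Sigma)$, so $c_1=c_0+c_2+2g(\Sigma)$. Substituting, the two inequalities above become $b_1-b_0\le 2c_0+2g(\Sigma)$ and $b_0-b_1\le 2c_2+2g(\Sigma)$, which rearrange to the two asserted bounds on $c_0(\Sigma)$ and $c_2(\Sigma)$. The argument makes no use of $p$ being odd; this hypothesis is present only because for $p=2$ both right-hand sides vanish, as $\det K_i$ is odd and hence $\beta_1(\Sigma_2(K_i);\F_2)=0$ by \Cref{cor:main_computation}.

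The step I expect to be the main obstacle is the first one: identifying the handle decomposition of $W$ induced by the Morse function on $\Sigma$, \ie showing that crossing an index-$i$ critical point of $\Sigma$ corresponds to attaching an $(i{+}1)$-handle to $W$. This comes down to local computations of double branched covers along the elementary surface cobordisms --- for instance, that the double branched cover of $B^3\times[0,1]$ along the disk swept out by a birth is the trace of a $4$-dimensional $1$-handle joining the two sheets of $\Sigma_2(S^3,L)\times\{0\}$, together with the analogous local models for saddles ($2$-handles) and deaths ($3$-handles). These identifications are classical; once they are in place, the rest is bookkeeping with handles and the Mayer--Vietoris sequence.
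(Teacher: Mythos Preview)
The paper does not prove this statement; it is quoted from Livingston \cite{livingston_crit_point_counts} as a black box and used as input for the proof of \Cref{prop:main_ribbon}. So there is nothing in the present paper to compare your argument against.

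That said, your proof is correct and is essentially Livingston's own argument: pass to the double branched cover $W$, observe that index-$i$ critical points of the height function on $\Sigma$ become $(i{+}1)$-handles of $W$, and then do elementary $\beta_1$ bookkeeping together with the Euler characteristic identity $c_1=c_0+c_2+2g(\Sigma)$. The local identification you flag as the main obstacle---that a birth, saddle, and death in the branch surface yield respectively a $1$-, $2$-, and $3$-handle in the double branched cover---is indeed classical (it goes back to Akbulut--Kirby's work on branched covers of surfaces in $4$-manifolds), and once it is in place the rest is, as you say, just Mayer--Vietoris bookkeeping. Your remark that the argument works for all primes and that $p=2$ only gives a vacuous inequality is also correct.
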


\section{Proof of \texorpdfstring{Theorem~\ref{thm:main_qp}}{Theorem 1.2}}\label{sec:main_proofs}
Since complex cobordisms are necessarily ribbon (see the introduction), we will focus on the construction of non-ribbon cobordisms. Theorem~\ref{thm:main_qp} will be deduced from the following more general statement.
\begin{prop}\label{prop:main_ribbon}
Let $K_0,K_1$ be two quasipositive knots that admit a cobordism of genus $g \geq 0$. Then there exist quasipositive knots $K_0^\prime$ and $K_1^\prime$ such that $K_0^\prime$ is concordant to $K_0$, $K_1^\prime$ is concordant to $K_1$, and there exists a cobordism of genus $g$ between $K_0^\prime$ and $K_1^\prime$. Moreover, no cobordism of genus~$g$ between $K_0^\prime$ and $K_1^\prime$ can be ribbon in any direction. 
\end{prop}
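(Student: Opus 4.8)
The plan is to build $K_i'$ from $K_i$ by taking connected sums with carefully chosen copies of a fixed auxiliary knot, so that the branched-cover Betti numbers $\beta_1(\Sigma_2(K_i'),\F_p)$ for some odd prime $p$ are driven far apart in a controlled way, while keeping quasipositivity, the concordance class, and the existence of a genus-$g$ cobordism intact. Concretely, fix an odd prime $p$ and a quasipositive knot $T$ with $p \mid \det T$ (for instance the $(2,p)$-torus knot, which is quasipositive since it is positive, and has $\det = p$). Set $K_0' = K_0 \# nT \# n(-T^r)$ and $K_1' = K_1 \# nT \# n(-T^r)$ for a suitable large $n$, where $T^r$ denotes the reverse (or mirror, whichever makes $T \# -T^r$ slice) so that $nT \# n(-T^r)$ is a slice knot; then $K_i'$ is concordant to $K_i$. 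Quasipositivity of $K_i'$ is the first point to check: connected sums of quasipositive knots are quasipositive, but $-T^r$ need not be, so this naive choice fails and must be repaired.

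The fix is to make the two sides \emph{unequal}: take $K_0' = K_0 \# a\,T \# S$ and $K_1' = K_1 \# b\,T \# S$ where $S$ is a single fixed slice quasipositive-summand-free correction and $a \ne b$ are chosen so that both $K_0'$ and $K_1'$ are concordant to $K_0$ resp.\ $K_1$ — but that cannot be done with $T$ alone since adding copies of $T$ changes the concordance class. So instead I would add to \emph{each} of $K_0, K_1$ the \emph{same} slice knot of the form $J := m\big(T \# (-T)\big)$ (a connected sum of $m$ copies of $T\#-T$, which is slice hence concordance-trivial), and then separately observe that $\Sigma_2(J) $ has large $\F_p$-homology: by \Cref{prop:main_computation} applied to both $T$ and $-T$ (which have the same determinant), $\beta_1(\Sigma_2(J),\F_p) \geq 2m$. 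This makes both Betti numbers large and equal, which is useless. The genuinely correct move, which I believe is what the paper does, is to add \emph{different} slice knots to the two sides: pick slice knots $J_0, J_1$ with $\beta_1(\Sigma_2(J_0),\F_p)$ and $\beta_1(\Sigma_2(J_1),\F_p)$ differing by more than $2(g + c)$ for the desired threshold $c$ on critical points, where both $J_0,J_1$ are realized so that $K_i \# J_i$ remains quasipositive. Since a slice knot is rarely quasipositive, one instead takes $K_i' = Q_i \# (K_i \# -Q_i')$-type combinations; the cleanest version: let $J_i$ be slice, write $J_i$ itself as not needing quasipositivity because we instead form $K_i' = K_i \# P_i$ where $P_i$ is a \emph{quasipositive} knot that is concordant to the unknot — but no nontrivial quasipositive knot is slice (by Rudolph's slice–Bennequin inequality).

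This tension is the main obstacle, and the resolution I would pursue is to \emph{not} keep $K_i'$ individually concordant to $K_i$ via a quasipositive summand, but rather to add quasipositive knots $P_0$ to $K_0$ and $P_1$ to $K_1$ where $P_0$ and $P_1$ are themselves \emph{concordant to each other}, chosen so that $P_0 \# -P_1$ is slice, $P_i$ quasipositive, and $\beta_1(\Sigma_2(P_1),\F_p) - \beta_1(\Sigma_2(P_0),\F_p)$ is as large as we like. Then $K_0' = K_0 \# P_0$ and $K_1' = K_1 \# P_1$ are quasipositive (connected sum of quasipositives), are concordant to $K_0$ resp.\ $K_1$ \emph{only if} we further compose with the fixed concordance $P_0 \sim P_1$ — which gives a concordance between $K_0'$ and $K_0 \# P_1$, not $K_0$; so the statement as literally written forces $P_0, P_1$ slice, contradiction. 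I therefore expect the actual argument only claims $K_i'$ concordant to $K_i$ for \emph{one} index and uses a genus shift on the other, or uses that the difference of 4-genera is preserved; the hard part will be threading quasipositivity (closed under connected sum, not under mirrors or concordance) through a construction that needs an asymmetry in branched-cover homology, and then invoking \Cref{thm:livingston}: once $\beta_1(\Sigma_2(K_0'),\F_p) - \beta_1(\Sigma_2(K_1'),\F_p) > 2g$ and symmetrically the reverse inequality is arranged to fail, any genus-$g$ cobordism would need $c_2(\Sigma) < 0$ in one direction and $c_0(\Sigma) < 0$ in the other, so no such cobordism is ribbon either way. The existence of \emph{some} genus-$g$ cobordism between $K_0'$ and $K_1'$ follows by stacking the given genus-$g$ cobordism between $K_0$ and $K_1$ with the (genus-$0$) cobordism realizing $P_0 \sim P_1$, taken connected-sum-wise.
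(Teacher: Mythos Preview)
Your proposal contains a decisive error that derails the entire construction: you assert that ``no nontrivial quasipositive knot is slice (by Rudolph's slice--Bennequin inequality).'' This is false. Rudolph's inequality forces $g_4 = g_3$ for \emph{strongly} quasipositive knots, but merely quasipositive knots can be slice --- and the paper's proof hinges on exactly this fact. The authors take $J_0 = m(8_{20})$ and $J_1 = m(11n_{50})$, both of which are quasipositive \emph{and} slice, with $\det J_0 = 9$ and $\det J_1 = 25$. Setting $K_0' = K_0 \# mJ_0$ and $K_1' = K_1 \# mJ_1$ immediately gives quasipositive knots concordant to the originals, with no need for any of the mirror/reverse contortions you attempt. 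All of your workarounds (adding $T\#(-T)$, adding concordant-but-not-slice $P_0, P_1$, etc.) are attempts to escape a trap that does not exist.

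There is a second, more strategic gap. You try to run the argument with a single prime $p$, aiming for $\beta_1(\Sigma_2(K_0'),\F_p) - \beta_1(\Sigma_2(K_1'),\F_p) > 2g$. But with one prime this inequality only forces $c_2 > 0$; the companion bound on $c_0$ becomes vacuous, so you obstruct ribbonness in just one direction. The paper uses \emph{two} primes: since $3\mid\det J_0$ while $3\nmid\det J_1$, taking $m$ large makes $\beta_1(\Sigma_2(K_0'),\F_3) - \beta_1(\Sigma_2(K_1'),\F_3) > 2g$, forcing $c_2>0$; simultaneously $5\mid\det J_1$ and $5\nmid\det J_0$ give $\beta_1(\Sigma_2(K_1'),\F_5) - \beta_1(\Sigma_2(K_0'),\F_5) > 2g$, forcing $c_0>0$. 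Both critical-point counts are then positive, so no genus-$g$ cobordism is ribbon in either direction. Your phrase ``symmetrically the reverse inequality is arranged to fail'' cannot be realized at a single prime; the asymmetry must come from choosing different slice quasipositive summands whose determinants are controlled by different primes.
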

\begin{proof}
    Fix $g \geq 0$. Let $J_0$ and $J_1$ denote the 
    mirrors of the knots $8_{20}$ and $11n_{50}$, respectively, from the Rolfsen and Hoste-Thistlethwaite knot table.
    From Knotinfo~\cite{knotinfo} we read off the following properties of $J_0$ and $J_1$:
    \begin{enumerate}[label=(P-\arabic*)]
    \item $\det J_0=9$, $\det J_1=25$;\label{item:Pdet}
    \item $J_0$ and $J_1$ are slice; \label{item:Pslice}
    \item $J_0$ and $J_1$ are quasipositive.\label{item:Pquasi}
    \end{enumerate}
    Choose an integer $m>0$, of which we will later make a sufficiently large choice.
    Denote by $mJ_0$ and $mJ_1$ the knots obtained as connected sums of~$m$ copies of~$J_0$ and~$J_1$, respectively, and define
    \begin{align*}
    K_0^\prime=K_0\# mJ_0,\quad  K_1^\prime=K_1\# mJ_1.
    \end{align*}
    Two properties of $K_0^\prime$ and $K_1^\prime$ are immediate:
    \begin{itemize}
    \item $K_0^\prime$ and $K_1^\prime$ are quasipositive because $J_0$ and $J_1$ are by \ref{item:Pquasi}, and a connected sum of quasipositive knots is quasipositive. In fact, a connected sum of knots is quasipositive if and only if both summands are quasipositive~\cite{orevkov}. The direction we use is easy to see using the characterization of quasipositive knots as closures of quasipositive braids~\cite{Rudolph_1983,boileau_orevkov_2001}.
    \item $K_0^\prime$ is concordant to $K_0$, and $K_1'$ is concordant to $K_1$, because $J_0$ and~$J_1$ are slice by \ref{item:Pslice}.
    \end{itemize}
    To prove that there is no ribbon cobordism of genus $g$ between $K_0^\prime$ and $K_1^\prime$ in either direction, for $i\in \{0,1\}$ and a prime $p$, denote by $d_{ip}$ the dimension
    \[d_{ip}=\dim_{\F_p}H_1(\Sigma_2(K_i);\F_p).\]
    By Proposition~\ref{prop:main_computation}, Corollary~\ref{cor:main_computation} and \ref{item:Pdet}, we have
    \begin{align*}
    \dim_{\F_3} H_1(\Sigma_2(mJ_0);\F_3)&\ge m,\quad \dim_{\F_5} H_1(\Sigma_2(mJ_0);\F_5)=0,\\
    \dim_{\F_3} H_1(\Sigma_2(mJ_1);\F_3)&=0, \quad \dim_{\F_5} H_1(\Sigma_2(mJ_1);\F_5)\ge m.
    \end{align*}
    Hence by Lemma~\ref{lem:conn_sum} we obtain
    \begin{align*}
    \begin{split}
    \dim_{\F_3} H_1(\Sigma_2(K_0^\prime);\F_3)-\dim_{\F_3} H_1(\Sigma_2(K_1^\prime);\F_3)&\ge m+d_{03}-d_{13} \quad \text{and}\\
    \dim_{\F_5} H_1(\Sigma_2(K_1^\prime);\F_5)-\dim_{\F_5} H_1(\Sigma_2(K_0^\prime);\F_5)&\ge m+d_{15}-d_{05}.
    \end{split}
    \end{align*}
    Now, if we chose $m$ sufficiently large, say $m>2(g+d_{03}+d_{13}+d_{05}+d_{15})$, from Theorem~\ref{thm:livingston} we obtain that a cobordism of genus $g$ between $K_0^\prime$ and~$K_1^\prime$ must necessarily fulfill $c_0,c_2>0$. Therefore such a cobordism cannot be ribbon in either direction.
\end{proof}

We are now ready to prove Theorem~\ref{thm:main_qp}.

\begin{proof}[{Proof of Theorem~\ref{thm:main_qp}}]
  Suppose $K_0$ and $K_1$ are two quasipositive knots which admit an optimal cobordism of genus $g$. We can \eg take the torus knots $K_0=T_{2,3}$ and $K_1=T_{2,2g+3}$. The knots $K_0^\prime$ and $K_1^\prime$ from Proposition~\ref{prop:main_ribbon} then have genus $g_4(K_0^\prime)=g_4(K_0)=1$ and~$g_4(K_1^\prime)=g_4(K_1)=g+1$ \cite{kronheimermrowka}. In particular, a cobordism of genus $g$ between $K_0^\prime$ and $K_1^\prime$ is optimal. By Proposition~\ref{prop:main_ribbon}, there is a cobordism of genus $g$ between $K_0^\prime$ and $K_1^\prime$. But such a cobordism has positive $c_0$ and $c_2$, so 
    it cannot be ribbon in any direction.
\end{proof}

We conclude this section with a remark about the $g=0$ case of Theorem~\ref{thm:main_qp}.
Although the knots $K_0^\prime$ and $K_1^\prime$ have $g_4=1$, they arise as non-trivial connected sums and in particular have a Seifert genus strictly larger than~$1$.

However, we note that for this $g=0$ case of Theorem~\ref{thm:main_qp} we could have used the prime knots $J_0$ and $J_1$ from the proof of Proposition~\ref{prop:main_ribbon} directly. Indeed, by \ref{item:Pslice} and \ref {item:Pquasi}, $J_0$ and~$J_1$ are quasipositive and slice, so there is a cobordism of genus $0$ between them. \ref{item:Pdet} and Theorem~\ref{thm:livingston} can be used again to show that there is no ribbon, and thus no complex cobordism, between~$J_0$ and~$J_1$. Alternatively, one could use \ref{item:Pdet} and a result of Gilmer~\cite{gilmer}, which implies that the determinants of $J_0$ and $J_1$ would need to be divisible by each other if there were ribbon concordances between them.

\section{Construction of strongly quasipositive knots \texorpdfstring{$K_0$}{K0} and \texorpdfstring{$K_1$}{K1}}\label{sec:constr}

\subsection{Proof of \texorpdfstring{Proposition~\ref{prop:sqp_conc}}{Proposition 1.3}}\label{sec:sqp}

Recall the statement of Proposition~\ref{prop:sqp_conc}: 
There exist strongly quasipositive knots $K_0$ and $K_1$ such that there is a concordance between them, but no ribbon (and thus no complex) concordance in either direction.

\begin{proof}[{Proof of Proposition~\ref{prop:sqp_conc}}] The proof is based on a construction of the second author~\cite{truoel_sqp} together with the ribbon concordance obstruction of Zemke~\cite{Zemke} from Heegaard Floer homology. 
  To summarize the construction, take a slice knot~$C$ with maximal Thurston--Bennequin number equal to $-1$. Examples of such knots include $C_0=\overline{9_{46}}$ and $C_1=\overline{10_{140}}$ (see~\cite{knotinfo}), where $\overline{K}$ denotes the mirror image of $K$. Diagrams for $C_0$ and~$C_1$ are shown in Figure~\ref{fig:companions}.

\begin{figure}[htbp]
    \centering
    \begin{subfigure}{0.45\textwidth}
        \includegraphics[width=\textwidth]{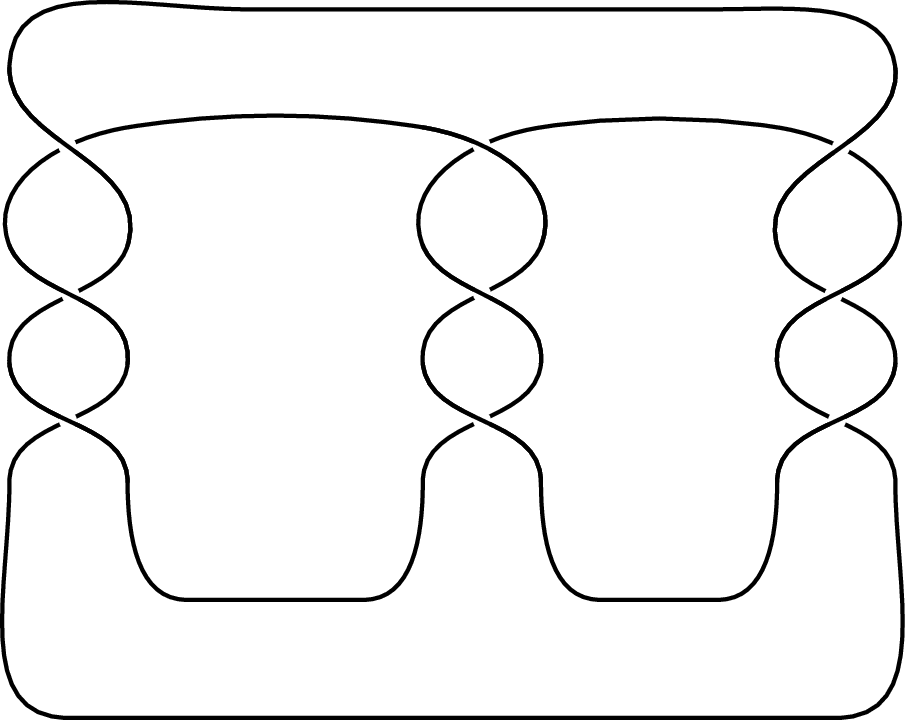}
    \end{subfigure}
    \qquad 
    \begin{subfigure}{0.45\textwidth}
        \includegraphics[width=\textwidth]{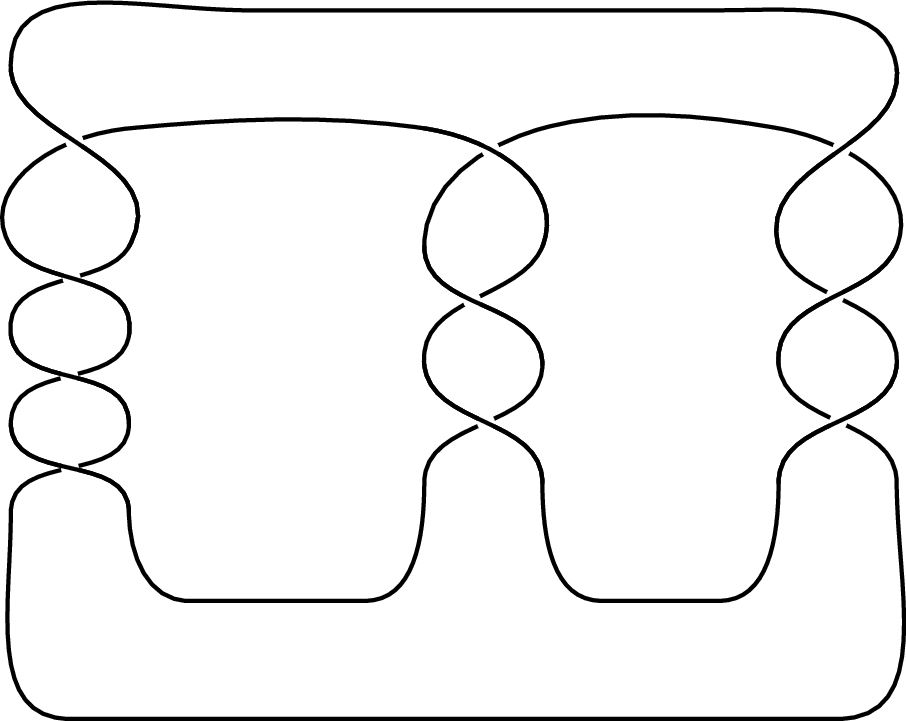}
    \end{subfigure}
    \caption{The knots $C_0=\overline{9_{46}}$ (left) and $C_1=\overline{10_{140}}$ (right).}
    \label{fig:companions}
\end{figure}

For a strongly quasipositive knot~$J$, \cite[Section 2.2]{truoel_sqp} constructs the knot $P_J(C)$, which is an instance of a satellite operation with pattern~$J$ and companion~$C$. 
Here a pattern refers to a knot $J$ that is smoothly embedded in the solid torus $V = S^1 \times D^2$. The embedding of $J$ into $V$ can be specified by drawing a meridian $\{\operatorname{pt}\} \times \partial D^2$ of $V$ in a diagram of $J$. See Figure~\ref{fig:pattern} for an example. Then the solid torus $V$ is identified with a tubular neighborhood of the companion knot $C$ in~$S^3$ via an orientation-preserving diffeomorphism  that maps $S^1 \times \{\operatorname{pt}\}$ to a Seifert longitude of $C$. The image of $J$ under this diffeomorphism is the satellite knot $P_J(C)$.
For the patterns considered in the construction of the second author, $P_J(C)$ is indeed obtained from~$J$ by tying the knot~$C$ into a specific positive band of a (strongly) quasipositive Seifert surface for~$J$ (see~\cite[Section 2.2]{truoel_sqp}).
The resulting knot is shown to be strongly quasipositive and concordant to~$J$ (see~\cite[Lemma~2]{truoel_sqp}). 

\begin{figure}[htbp]
    \centering
    \includegraphics[width=0.25\linewidth]{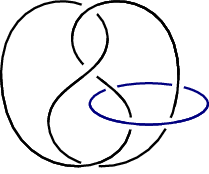}
    \caption{The right-handed trefoil $J= T_{2,3}$ (black) as pattern in the solid torus $V = S^1 \times D^2$ with the meridian of~$V$ (blue). The algebraic winding number of this pattern is $0$.}
    \label{fig:pattern}
\end{figure}

We show that for the aforementioned $C_0=\overline{9_{46}}$ and $C_1=\overline{10_{140}}$, using the right-handed trefoil $J=T_{2,3}$ as pattern (see Figure~\ref{fig:pattern}), although both are concordant to~$J$ and thus to each other, the satellite knots $K_0=P_J(C_0)$ and $K_1=P_J(C_1)$ are not ribbon concordant. 
This is done by explicit calculations. 
Diagrams for the knots $K_0$ and $K_1$ are shown in Figure~\ref{fig:knots}.

\begin{figure}[htbp]
    \centering
    \begin{subfigure}{0.48\textwidth}
        \includegraphics[width=\textwidth]{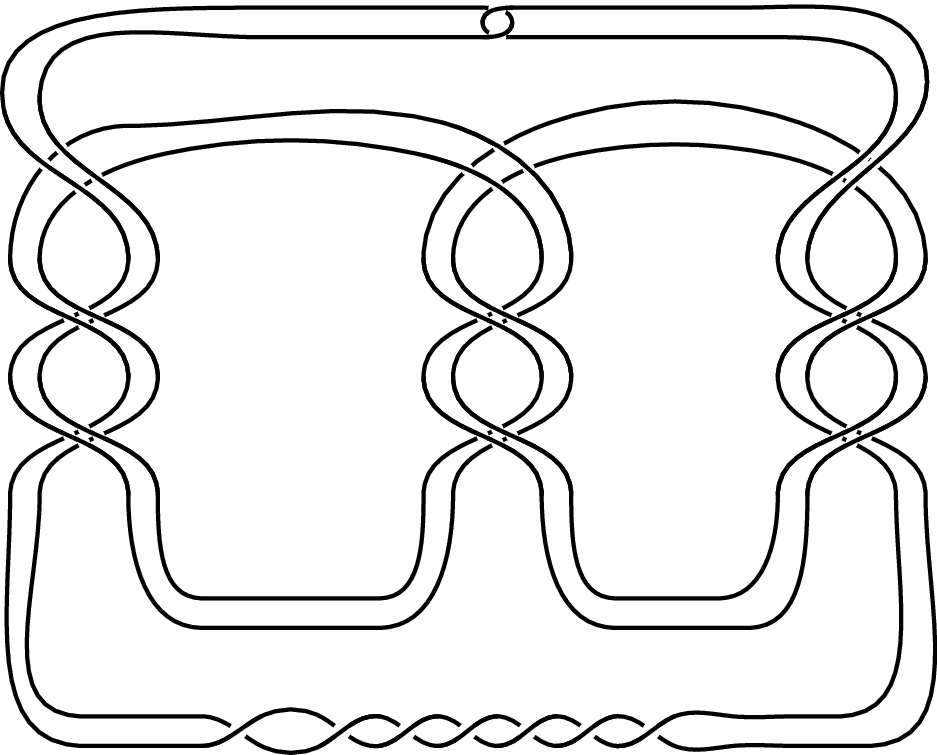}
    \end{subfigure}
    \quad 
    \begin{subfigure}{0.48\textwidth}
        \includegraphics[width=\textwidth]{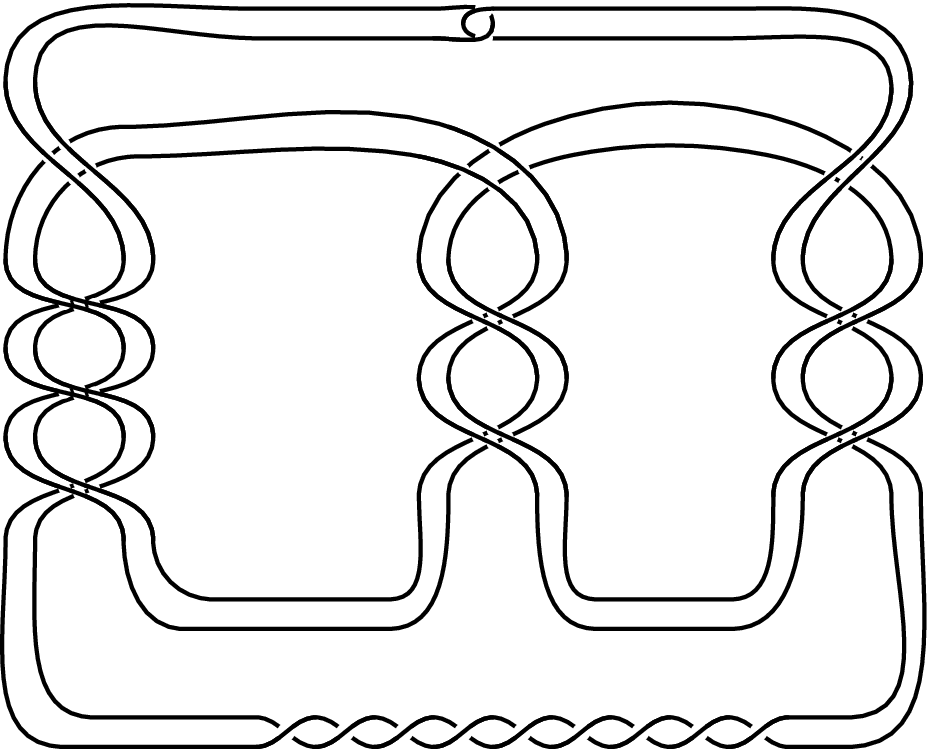}
    \end{subfigure}
    \caption{The knots $K_0=P_J(C_0)$ (left) and $K_1=P_J(C_1)$ (right), which are obtained as satellite knots with pattern~$J$ and companions $C_0=\overline{9_{46}}$ and $C_1=\overline{10_{140}}$, respectively.}
    \label{fig:knots}
\end{figure}

Given these, one can use SnapPy's \cite{SnapPy}
implementation of Szab\'o's HFK calculator \cite{HFKcalc} to compute the hat version of knot Floer homology for~$K_0$ and $K_1$ over $\F_2$.
The results are presented in Table~\ref{tab:hfk}, which shows the dimensions of $\hfk(K_i)= \bigoplus_{A,M \in \Z}\hfk_M(K_i,A)$ in the relevant $(\text{Alexander, Maslov})$ gradings for $i \in \{0,1\}$. 
\begin{table}[htbp]
\begin{tabular}{|c|c|c|} \hline & & \\[-1em]
gradings $(A,M)$ & $\dim \hfk_M(K_0,A)$ & $\dim \hfk_M(K_1,A)$ \\ \hline
(-1, -3)&0& 2 \\ \hline
  (-1, -2)& 5& 3 \\ \hline
  (-1, -1)& 4& 2 \\ \hline
  (-1, 0)& 0& 2 \\ \hline
  (0, -2)& 0& 4\\ \hline
  (0, -1)& 9& 5\\ \hline
  (0, 0)& 8&4 \\ \hline
  (0, 1)& 0& 4 \\ \hline
  (1, -1)& 0& 2 \\ \hline
  (1, 0)& 5& 3\\ \hline
  (1, 1)& 4&2\\ \hline
  (1, 2)& 0&2\\ \hline 
\end{tabular}
\caption{$\hfk$ for $K_0$ and $K_1$. 
}\label{tab:hfk}
\end{table}
The table indicates that for some gradings, such as $(0,0)$, the dimension for $K_0$ is greater than the dimension for $K_1$, while for some other gradings, such as~$(-1,-3)$, the dimension for $K_1$ is greater. Therefore, there is neither an injection $\hfk(K_0)\to \hfk(K_1)$ nor $\hfk(K_1)\to \hfk(K_0)$. According to \cite{Zemke} this implies that the two knots are not ribbon concordant.
\end{proof}

We could have also used Khovanov homology as an obstruction to ribbon concordance \cite{Levine_Zemke} in our proof of Proposition~\ref{prop:sqp_conc}. In fact, using~\cite{KnotJob} one can compute $\Kh^{i,j}(K_0)$ and $\Kh^{i,j}(K_1)$ with $\mathbb{F}_2$ coefficients and show that there is no injective map between these spaces in either direction. It is worth mentioning that both $K_0$ and $K_1$ have non-trivial Khovanov homology in negative homological degrees: $\Kh^{-2,-1}(K_0)\cong\mathbb{F}_2$, and $\Kh^{-1,0}(K_1)\cong\mathbb{F}_2$.
To the best of our knowledge, these could be the first examples of strongly quasipositive knots with nontrivial Khovanov homology in negative homological gradings. In contrast, positive links, being a narrower class of links (see \cite{Rudolph_positiveLinksSQP,nakamura}), have non-vanishing $\Kh^{i,j}$ only for $i \geq 0$, see~\cite{Khovanov_patterns,kegel}.

\subsection{Further properties of \texorpdfstring{$K_0$}{K0} and \texorpdfstring{$K_1$}{K1}}

We conclude with additional comments on our construction.

From Table~\ref{tab:hfk} we see that the graded Euler characteristic of~$\hfk$ and thus the Alexander polynomial of both knots $K_0$ and $K_1$ constructed in Section~\ref{sec:sqp} is~$t-1+t^{-1}$. This is consistent with the fact that $K_0$ and~$K_1$ are satellite knots with pattern the trefoil $J$ with algebraic winding number~$0$ (see also~Figure~\ref{fig:pattern}). Indeed, the well-known satellite formula for the Alexander polynomial predicts that $\Delta_{K_0}(t) = \Delta_{K_1}(t) = \Delta_J(t) = t-1+t^{-1}$ (see \eg \cite[Theorem 6.15]{Lickorish97}).

The same argument as in~\cite{Rudolph_surfaces} implies that the Seifert form of the two knots is equal to the Seifert form of the pattern,
that is of the trefoil. Note that SnapPy~\cite{SnapPy} computes the Seifert form correctly, providing another sanity check for our calculations.

The knots $K_0$ and $K_1$ have Seifert genus $1$, as follows from a standard argument on the behaviour of the Seifert genus under satellite operations~\cite{schubert}.
Since $K_0$ and $K_1$ are both concordant to $J=T_{2,3}$, we also have $g_4(K_0) = g_4(K_1) = g_4(J) = 1$. Alternatively, we could conclude this from the equality of the Seifert genus and $4$-genus for strongly quasipositive knots~\cite{bennequin,rudolphslice}. Thus, the examples we construct in Section~\ref{sec:sqp} have the smallest possible Seifert genus. 

Regarding Question~\ref{quest:2}, note that the knots $K_0$ and $K_1$ are not fibered by construction. This remark was already made in \cite[Remark 2.5]{pt_thesis}, based on~\cite{truoel_sqp}. One of the simplest arguments is that the algebraic winding number of the pattern is $0$, so the satellite knot is not fibered.

\section{Details of the calculations for \texorpdfstring{$K_0$}{K0} and \texorpdfstring{$K_1$}{K1}}\label{sec:details}

Since our calculations in Section~\ref{sec:sqp} were computer-based, we provide some details here. We used the diagrams in Figure~\ref{fig:knots}
to compute the PD codes of the knots $K_0$ and $K_1$. To facilitate verification of these calculations, we include Figures~\ref{fig:PD_K0} and \ref{fig:PD_K1}, which show the numbering of some of the strands used to calculate the PD codes.

\begin{figure}[htbp]
  \includegraphics[width=8cm]{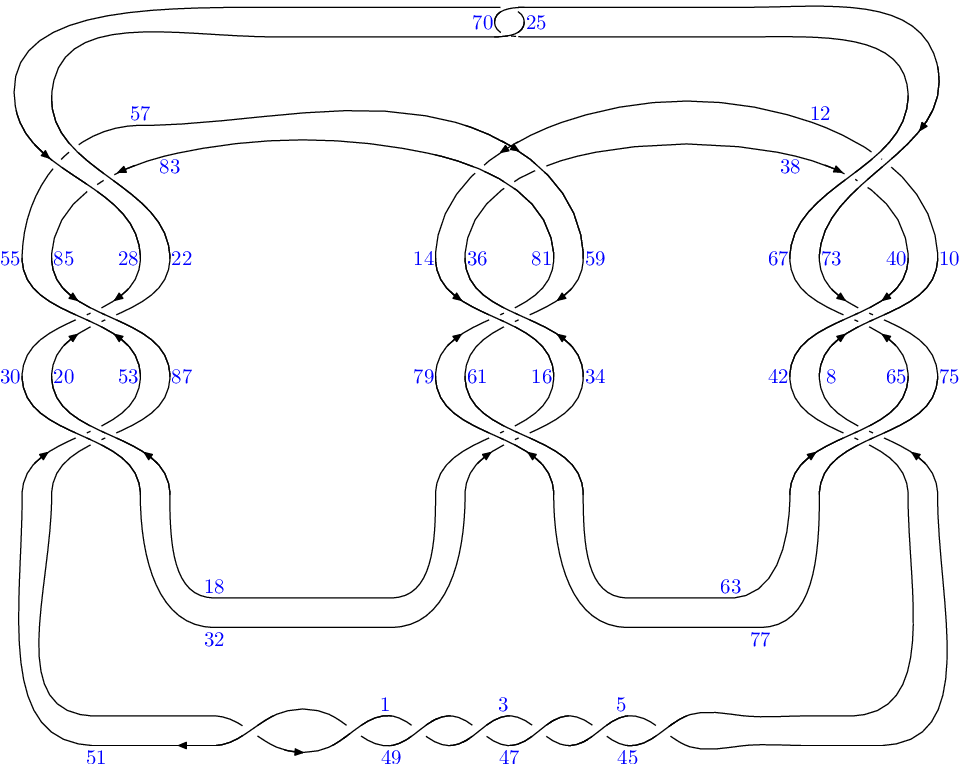}
  \caption{The diagram of $K_0$ with the strand numbering used to calculate its PD code.}\label{fig:PD_K0}
\end{figure}

\begin{figure}[htbp]
  \includegraphics[width=8cm]{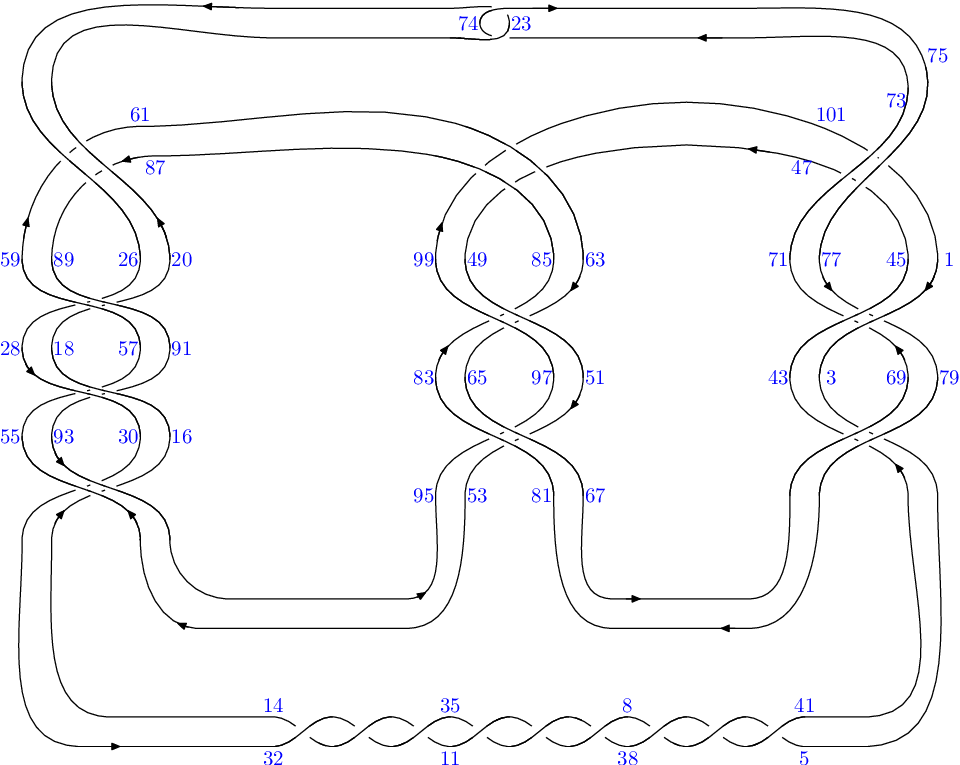}
  \caption{The diagram of $K_1$ used to calculate its PD code.}\label{fig:PD_K1}
\end{figure}

The corresponding PD codes are given as follows. 

\begin{sizeddisplay}{\footnotesize}
\begin{align*}
  K_0\colon
  [&[1,49,2,48],[47,3,48,2],[3,47,4,46],[45,5,46,4],[5,45,6,44],[43,77,44,76],\\&[6,75,7,76],[42,63,43,64],[7,65,8,64],[65,9,66,8],[66,41,67,42],\\
  &[74,9,75,10],[73,41,74,40],[39,73,40,72],[10,71,11,72],[38,67,39,68],\\
  &[11,69,12,68],[25,71,26,70],[69,25,70,24],[88,31,89,32],[51,31,52,30],\\
  &[87,19,88,18],[52,19,53,20],[20,53,21,54],[29,55,30,54],[21,87,22,86],\\
  &[28,85,29,86],[84,27,85,28],[55,27,56,26],[83,23,84,22],[56,23,57,24],\\
  &[32,77,33,78],[17,79,18,78],[33,63,34,62],[16,61,17,62],[60,15,61,16],\\
  &[79,15,80,14],[59,35,60,34],[80,35,81,36],[36,81,37,82],[37,59,38,58],\\
  &[13,83,14,82],[12,57,13,58],[89,51,90,50],[49,1,50,90]],\\
K_1\colon&
[[13, 33, 14, 32], [33, 13, 34, 12], [11, 35, 12, 34], [35, 11, 36, 10], [9, 37, 10, 36],\\
&[37, 9, 38, 8], [39, 7, 40, 6], [5, 41, 6, 40], [23, 75, 24, 74], [73, 23, 74, 22],\\
&[30, 93, 31, 94], [31, 55, 32, 54], [14, 53, 15, 54], [15, 95, 16, 94], [91, 17, 92, 16],\\
&[92, 29, 93, 30], [55, 29, 56, 28], [56, 17, 57, 18], [26, 89, 27, 90], [27, 59, 28, 58],\\
&[18, 57, 19, 58], [19, 91, 20, 90], [59, 25, 60, 24], [60, 21, 61, 22], [87, 21, 88, 20],\\
&[88, 25, 89, 26], [51, 66, 52, 67], [52, 82, 53, 81], [95, 82, 96, 83], [96, 66, 97, 65],\\
&[83, 98, 84, 99], [84, 50, 85, 49], [63, 50, 64, 51], [64, 98, 65, 97], [47, 62, 48, 63],\\
&[48, 86, 49, 85], [99, 86, 100, 87], [100, 62, 101, 61], [3, 68, 4, 69], [4, 80, 5, 79],\\
&[41, 80, 42, 81], [42, 68, 43, 67], [69, 2, 70, 3], [70, 44, 71, 43], [77, 44, 78, 45],\\
&[78, 2, 79, 1], [45, 76, 46, 77], [46, 72, 47, 71], [101, 72, 102, 73], [102, 76, 1, 75],\\
&[7, 39, 8, 38]].
\end{align*}
\end{sizeddisplay}
\normalsize
\bibliographystyle{alpha}
\bibliography{bibliography}

\end{document}